\newcommand{\newjointcountertheorem}[3]{
	\newaliascnt{#1}{#2}
	\newtheorem{#1}[#1]{#3}
	\aliascntresetthe{#1}	
}
\newtheorem*{thm-galois}{Theorem \ref{thm:gen galois values}}
\newtheorem{thm}{Theorem}[section]
\theoremstyle{definition}
\newcommand{\qmultinom}[2]{\genfrac{[}{]}{0pt}{}{#1}{#2}}
\def\Snospace~{\S{}}
\numberwithin{equation}{section}
\begin{document}

	\title[Asymptotics of Galois numbers via affine Kac-Moody algebras]{Asymptotics of generalized Galois numbers via affine Kac-Moody algebras}
	
	\author[Stavros Kousidis]{Stavros Kousidis}
  	\address{Stavros Kousidis, Institute for Theoretical Physics\\ETH Zurich\\Wolfgang--Pauli--Strasse 27\\CH-8093 Zurich\\Switzerland}
	\email{\href{mailto:st.kousidis@googlemail.com}{st.kousidis@googlemail.com}}

	\subjclass[2010]{05A16 Asymptotic enumeration, 06B15 Representation theory, 94B05 Linear codes}
	\keywords{Asymptotic enumeration, Galois number, affine Kac-Moody algebra, Demazure module, linear code}
	
	\begin{abstract}
		Generalized Galois numbers count the number of flags in vector spaces over finite fields. Asymptotically, as the dimension of the vector space becomes large, we give their exponential growth and determine their initial values. The initial values are expressed analytically in terms of theta functions and Euler's generating function for the partition numbers. Our asymptotic enumeration method is based on a Demazure module limit construction for integrable highest weight representations of affine Kac-Moody algebras. For the classical Galois numbers, that count the number of subspaces in vector spaces over finite fields, the theta functions are Jacobi theta functions. We apply our findings to the asymptotic number of linear $q$-ary codes, and conclude with some final remarks about possible future research concerning asymptotic enumerations via limit constructions for affine Kac-Moody algebras and modularity of characters of integrable highest weight representations.
	\end{abstract}

	\maketitle
	\tableofcontents

	\section{Introduction}	
		The generalized Galois numbers $G_N^{(r)}(q)$ count the number of flags $0 = V_0 \subseteq V_1 \subseteq \cdots \subseteq V_r = \mathbf{F}_q^N$ of length $r$ in an $N$-dimensional vector space over a field with $q$ elements \cite{v10}. In particular, when $r=2$ these are the classical Galois numbers studied by Goldman and Rota \cite{MR0252232} which give the total number of subspaces in $\mathbf{F}_q^N$.
		
		We show that the generalized Galois numbers grow asymptotically, as $r$ is fixed and $N \rightarrow \infty$, exponentially with factor $O(N^2)$ in logarithmic ``time'' scale:
		\[
			G_N^{(r)}(q) \sim I_r(q)  \cdot e^{O(N^2) \log (q)}
			.
		\]
		Here, ``time'' equals the cardinality of the finite field. Our main result is the explicit description of the initial values $I_r(q)$ via theta functions and Euler's generating function for the partition numbers.
		
		This investigation serves three purposes.
		First, the generalized Galois numbers are of independent interest as they enumerate points in fundamental geometric objects defined over finite fields. For example, by definition the classical Galois numbers
		\[
			G_N(q)=G_N^{(2)}(q) = \sum_{k=0}^N |\mathrm{Gr}(k,N)(\mathbf{F}_{q})|
		\]
		count the number of $\mathbf{F}_q$-rational points in Grassmann varieties. The numbers of solutions of the set of equations for $\mathrm{Gr}(k,N)$ in extension fields $\mathbf{F}_{p^n}$ of $\mathbf{F}_{p}$ are in turn subject to the study of local zeta-functions $Z(\mathrm{Gr}(k,N),t) = \exp ( \sum_{n \geq 1} |\mathrm{Gr}(k,N)(\mathbf{F}_{p^n})| \frac{t^n}{n} )$ in number theory. Let us mention that a generating function for the local zeta-function $Z(\mathrm{Gr}(k,N),t)$ can be given by
		\[
			Z(\mathrm{Gr}(k,N),t) = \frac{1}{(1-t)^{b_0}(1-pt)^{b_1}\ldots(1-p^{k(N-k)}t)^{b_{k(N-k)}}} ,
		\]
		where the $b_i = \dim H_{2i}(\mathrm{Gr}(k,N)(\mathbf{C}),\mathbf{Z})$ are the even topological Betti numbers of the complex Grassmannian. Consequently, the study of Galois numbers reflects upon many subjects.

		Second, the Galois numbers enumerate asymptotically the number of equivalence classes of linear $q$-ary codes in algebraic coding theory as recently shown by Hou and Wild \cite{MR2177491,MR2307131,MR2492098,MR1755766,MR2191288}. For example, the asymptotic number $N_{n,q}^{\mathfrak{S}}$ of linear $q$-ary codes under permutation equivalence is
		\[
			N_{n,q}^{\mathfrak{S}} \sim \frac{G_n(q)}{n!}
			.
		\]
		We apply our findings to those asymptotic equivalences, and derive considerable simplifications of the asymptotic enumeration of linear $q$-ary codes (\autoref{sec:applications}).
		
		Third, our investigation serves the demonstration of the asymptotic enumeration method itself (\autoref{sec:asymptotics}). We identify the generalized Galois numbers $G_N^{(r)}(q)$ as the basic specialization of the Demazure modules $V_{-N\omega_1}(\Lambda_0)$ of the affine Kac-Moody algebra $\widehat{\mathfrak{sl}}_r$ (see \eqref{demazure basic specialization galois}).
		 Those characters pass via a graded limit construction \cite{MR2323538,MR894387,MR932325,MR980506} to the characters of the fundamental representations of our affine Kac-Moody algebra:
		\[
			\lim_{n \rightarrow \infty} \chi (V_{-(rn+j)\omega_1}(\Lambda_0)) = \chi (V(\Lambda_j))
			.
		\]
		By a symmetry argument, Kac's \cite{MR513845} character formula for the basic representation
		\[
			\chi (V(\Lambda_0)) = \sum_{k=0}^\infty p^{(r-1)}(k) e^{\Lambda_0 -k\delta} \cdot \sum_{\gamma \in Q} e^{- (h || \gamma ||^2\delta + \gamma )}
		\]
		then allows us to prove our main result:
		\begin{thm-galois}
			Consider the generalized Galois number $G_N^{(r)}(q)$. For any prime power $e^\delta= p^m$ (in fact for any complex number $e^\delta$ where $\delta \in -2\pi i \mathbf{H}$) and $0 \leq j <r$ we have the limit
			\begin{align*}
				\label{eq:gen galois values}
				\lim_{n \rightarrow \infty} G^{(r)}_{rn + j}(e^\delta) \cdot e^{- u_j(r,n) \delta} & = \frac{ \Theta_{F_j} (-\frac{\delta}{2 \pi i})}{\phi(e^{-\delta})^{r-1}} .
			\end{align*}
			Here, $\phi(x)^{-1} = \prod_{m=1}^\infty (1-x^{m})^{-1}$ denotes Euler's generating function for the partition numbers, and $\Theta_{F_j}(z) = \sum_{{\bf k} \in \mathbf{Z}^{r-1}} e^{2 \pi i z F_j({\bf k})} $ are theta functions associated to the quadratic forms $F_0 , F_1 , \ldots , F_{r-1}$ on the lattice $\mathbf{Z}^{r-1}$ given by
			\begin{align*}
				F_0 (k_1,\ldots , k_{r-1}) & = \sum_{l=1}^{r-1} k_l^2 - \sum_{l=1}^{r-2} k_l k_{l+1} , \\
				F_j (k_1,\ldots , k_{r-1}) & = \left( k_j + \frac 12 \right)^2 + \sum_{l=1, l \neq j}^{r-1} k_l^2 - \sum_{l=1}^{r-2} k_l k_{l+1} .
			\end{align*}
			The exponents $u_0,u_1,\ldots ,u_{r-1}$ are
			\begin{align*}
				u_0(r,n) & = \frac{r(r-1)n^2}{2} , \\
				u_j(r,n)  & =  \frac{(rn+j)(rn+j-1)}{2} - \frac{rn(rn+2j-r)}{2r} + \frac 14
				.
			\end{align*}
		\end{thm-galois}
			For the classical Galois numbers our theta functions turn out to be Jacobi theta functions (see \autoref{galois values}).
			
			Let us conclude the introduction with the following remark on our asymptotic enumeration method. In the case of generalized Galois numbers we do not make use of the modularity of characters of integrable highest weight modules, since the prime powers $p^{-m} < 1$ lie in the region of convergence of our modular forms. However, we will discuss, in \autoref{sec:conclusion}, an important eventual application of our asymptotic enumeration method where modularity has to be exploited.
							
	\section{Notation and Background}
	\label{sec:notation}
		
		 The generalized Galois number $G_N^{(r)}(q) \in \mathbf{N}[q]$ can be defined as the specialization of the generalized $N$-th Rogers-Szeg\H{o} polynomial at $({\bf 1},q)$ \cite{v10}:
		\[
			G_N^{(r)}(q) = H_N^{(r)}({\bf 1},q)
			.
		\]
		The $N$-th generalized Rogers-Szeg\H{o} polynomial $H_N^{(r)}({\bf z},q) \in \mathbf{N}[z_1,\ldots ,z_{r},q]$ \cite{r1893a,r1893b,s1926} (see \cite{MR1634067} for an account) is defined as the generating function of the $q$-multinomial coefficients:
	\[
		H^{(r)}_N({\bf z} , q)
		= \sum_{\substack{{\bf k} = (k_1,\ldots,k_r) \in \mathbf{N}^{r}\\k_1 + \ldots + k_r =N}}\qmultinom{N}{{\bf k}}_q {\bf z^k}
		.	
	\]	
	Recall from \cite{v10} that the $q$-multinomial coefficient $\qmultinom{N}{k_1, \ldots , k_r}_q$ counts the number of flags $0 = V_0 \subseteq \cdots \subseteq V_r = \mathbf{F}_q^N$ subject to the conditions $\dim(V_i) = k_1 + \cdots + k_i$.

		For general facts about affine Kac-Moody algebras and their representation theory we refer the reader to \cite{MR2188930,MR1104219}, and for Demazure modules to \cite{MR2323538}.
		Let us briefly fix the notation we will use throughout.
		We consider the affine Kac-Moody algebra $\widehat{\mathfrak{sl}}_{r}$. We denote the simple roots by $\alpha_0 , \alpha_1, \ldots , \alpha_{r-1}$, the highest root by $\theta = \alpha_1 + \ldots + \alpha_{r-1}$ and the imaginary root by $\delta = \alpha_0 + \theta$. The affine root lattice is then defined as $\widehat{Q} = \mathbf{Z} \alpha_0 \oplus \mathbf{Z} \alpha_1 \oplus \ldots \oplus \mathbf{Z} \alpha_{r-1}$ and the real span of the simple roots is given by $\widehat{\mathfrak{h}}_{\mathbf{R}}^\ast = \mathbf{R} \otimes_{\mathbf{Z}} \widehat{Q}$. We have a non-degenerate symmetric bilinear form on $\widehat{\mathfrak{h}}_{\mathbf{R}}^\ast$ by $\langle \alpha_i , \alpha_j \rangle = c_{ij}$ where $C = (c_{ij})$ is the Cartan matrix of $\widehat{\mathfrak{sl}}_{r}$, and define $|| \cdot ||^2 = (2h)^{-1} \langle \cdot , \cdot \rangle$ where $h=r$ is the Coxeter number of $\widehat{\mathfrak{sl}}_{r}$.
		For a dominant integral weight $\Lambda = m_1\Lambda_0 + m_2\Lambda_1 + \ldots + m_{r-1} \Lambda_{r-1}$ we let $V(\Lambda)$ be the integrable highest weight representation of weight $\Lambda$ of $\widehat{\mathfrak{sl}}_{r}$ and $\chi(V(\Lambda))$ its character. The $\Lambda_0, \Lambda_1, \ldots , \Lambda_{r-1}$ are called fundamental weights,  the $V(\Lambda_l)$ the fundamental representations and $V(\Lambda_0)$ the basic representation. 
		As for the Demazure modules, we will only consider the translations $t_{-k\omega_1} = (s_1 s_2 \ldots s_{r-1} \sigma^{r-1})^k$ in the extended affine Weyl group of $\widehat{\mathfrak{sl}}_{r}$, where $\omega_1 = \Lambda_1 - \Lambda_0$. Here, $\sigma$ denotes the automorphism of the Dynkin diagram of $\widehat{\mathfrak{sl}}_{r}$ which sends $0$ to $1$, and $s_1, \ldots , s_{r-1}$ are the simple reflections associated to the simple roots $\alpha_1, \ldots , \alpha_{r-1}$. We denote the Demazure module associated to those translations by $V_{-k\omega_1}(\Lambda)$ and its character by $\chi(V_{-k\omega_1}(\Lambda))$. We write the monomials in the characters of our modules as $e^{\lambda}$, the coefficient $k$ in the monomial $e^{-k\alpha_0}$ is referred to as the degree.
		
		$\mathbf{H}$ will denote the upper half plane in $\mathbf{C}$. We write $\sim$ for asymptotic equivalence, that is for $f,g : \mathbf{N} \rightarrow \mathbf{R}_{>0}$ we write $f(n) \sim g(n)$ if $\lim_{n \rightarrow \infty} f(n)/g(n) =1$.
	
	\section{Asymptotics of generalized Galois numbers}
	\label{sec:asymptotics}
	
	Let us start with a direct consequence of Kac's character formula \cite[(3.37)]{MR513845}.
	
	\begin{prp}
		Consider the basic representation  $V(\Lambda_0)$ of $\widehat{\mathfrak{sl}}_{r}$. Let $Q$ be the lattice $Q = \widehat{Q}/\mathbf{Z}\alpha_0 = \mathbf{Z}\alpha_1 \oplus \ldots \oplus \mathbf{Z} \alpha_{r-1} \cong \mathbf{Z}^{r-1}$. Then,
		\begin{align}
		\label{character basic representation}
			\chi (V(\Lambda_0)) & = \frac{e^{\Lambda_0}}{\phi(e^{-\delta})^{r-1}} \cdot \sum_{{\bf k} \in \mathbf{Z}^{r-1}} e^{\frac 14 \theta} \prod_{l=0}^{r-1} e^{-F_l({\bf k}) \alpha_l}
			.
		\end{align}
		Here, $\phi(x)^{-1} = \prod_{i=1}^m (1-x^m)^{-1}$ is Euler's generating function for the partition numbers, and the $F_0,F_1,\ldots ,F_{r-1}$ are quadratic forms on the lattice $Q \cong \mathbf{Z}^{r-1}$ defined as
		\begin{align}
			F_0 (k_1,\ldots , k_{r-1}) & = \sum_{l=1}^{r-1} k_l^2 - \sum_{l=1}^{r-2} k_l k_{l+1} ,\\
			F_j (k_1,\ldots , k_{r-1}) & = \left( k_j + \frac 12 \right)^2 + \sum_{l=1, l \neq j}^{r-1} k_l^2 - \sum_{l=1}^{r-2} k_l k_{l+1} .
		\end{align}
	\end{prp}
	
	\begin{proof}
		Due to Kac \cite[(3.37)]{MR513845} we have the following character formula vor the basic representation $V(\Lambda_0)$:
		\begin{align}
		\label{eq:Kac character}
			\chi (V(\Lambda_0)) = \sum_{k=0}^\infty p^{(r-1)}(k) e^{\Lambda_0 -k\delta} \cdot \sum_{\gamma \in Q} e^{- (h || \gamma ||^2\delta + \gamma )}
		\end{align}
		The function $p^{(r-1)}(k)$ is defined via its generating function $\sum_{k=0}^\infty p^{(r-1)}(k) x^k = \phi(x)^{-r+1}$. 
		Let $f({\bf k})$ be the quadratic form $f({\bf k}) = f(k_1 , \ldots , k_{r-1}) = \sum_{i=1}^{r-1} k_i^2 - \sum_{i=1}^{r-2} k_i k_{i+1}$ on the lattice $Q \cong \mathbf{Z}^{r-1}$. If we express $\gamma \in Q$ as the linear combination $\gamma = k_1 \alpha_1 + \ldots + k_{r-1} \alpha_{r-1}$, we have $h|| \gamma ||^2 = f({\bf k})$. Then,
		\begin{align*}
			\chi (V(\Lambda_0)) & = \sum_{k=0}^\infty p^{(r-1)}(k) e^{\Lambda_0 -k\delta} \cdot \sum_{\gamma \in Q} e^{- (h || \gamma ||^2\delta + \gamma )} \\
				& = \frac{e^{\Lambda_0}}{\phi(e^{-\delta})^{r-1}} \cdot \sum_{\gamma \in Q} e^{- (h || \gamma ||^2\delta + \gamma) } \\
				& = \frac{e^{\Lambda_0}}{\phi(e^{-\delta})^{r-1}} \cdot \sum_{{\bf k} \in \mathbf{Z}^{r-1}} e^{- f({\bf k}) \delta} e^{-k_1\alpha_1 } \ldots e^{-k_{r-1} \alpha_{r-1}} \\
				& = \frac{e^{\Lambda_0}}{\phi(e^{-\delta})^{r-1}} \cdot \sum_{{\bf k} \in \mathbf{Z}^{r-1}} e^{- f({\bf k}) \alpha_0} e^{-(f({\bf k}) +k_1)\alpha_1} \ldots e^{-(f({\bf k}) +k_{r-1})\alpha_{r-1}}
				.
		\end{align*}
		Note that $F_0({\bf k}) = f({\bf k})$ and $F_j ({\bf k}) - \frac 14 = f ({\bf k}) +k_j$. This finishes the proof.
	\end{proof}
	
	We are ready to prove our main result.
	
	%	The Euler function $\phi(q)$ is an analytic function in $|q|<1$. We let $\delta = -2 \pi i \tau$ in for $\tau = x+iy \in \mathbf{H}$ and hence $e^{-\delta} = e^{-(-2 \pi i \tau)} = e^{-(-2 \pi i x +2 \pi y)} = e^{2 \pi i x}e^{-2 \pi y}$. Hence, $|e^{-\delta}| = e^{-2 \pi y} <1$ since $y >0$.
	%	The theta functions are defined when the argument is in $\mathbf{H}$ therefore we put $\delta \in -2 \pi i \mathbf{H}$ to have cancellation in our formulation.
	\begin{thm}
	\label{thm:gen galois values}
		Consider the generalized Galois number $G_N^{(r)}(q)$. For any prime power $e^\delta= p^m$ (in fact for any complex number $e^\delta$ where $\delta \in -2\pi i \mathbf{H}$) and $0 \leq j <r$ we have the limit
			\begin{align}
				\label{eq:gen galois values}
				\lim_{n \rightarrow \infty} G^{(r)}_{rn + j}(e^\delta) \cdot e^{- u_j(r,n) \delta} & = \frac{ \Theta_{F_j} (-\frac{\delta}{2 \pi i})}{\phi(e^{-\delta})^{r-1}} .
			\end{align}
			Here, $\phi(x)^{-1} = \prod_{m=1}^\infty (1-x^{m})^{-1}$ denotes Euler's generating function for the partition numbers, and $\Theta_{F_j}(z) = \sum_{{\bf k} \in \mathbf{Z}^{r-1}} e^{2 \pi i z F_j({\bf k})} $ are theta functions associated to the quadratic forms $F_0 , F_1 , \ldots , F_{r-1}$ on the lattice $\mathbf{Z}^{r-1}$ given by
			\begin{align}
				F_0 (k_1,\ldots , k_{r-1}) & = \sum_{l=1}^{r-1} k_l^2 - \sum_{l=1}^{r-2} k_l k_{l+1} , \\
				F_j (k_1,\ldots , k_{r-1}) & = \left( k_j + \frac 12 \right)^2 + \sum_{l=1, l \neq j}^{r-1} k_l^2 - \sum_{l=1}^{r-2} k_l k_{l+1} .
			\end{align}
			The exponents $u_0,u_1,\ldots ,u_{r-1}$ are
			\begin{align}
				u_0(r,n) & = \frac{r(r-1)n^2}{2} , \\
				u_j(r,n)  & = \frac{(rn+j)(rn+j-1)}{2} - \frac{rn(rn+2j-r)}{2r} + \frac 14
				.
			\end{align}
	\end{thm}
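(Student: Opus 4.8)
The plan is to run the graded Demazure limit through the basic specialization and then compare both sides as power series in $e^{-\delta}$. By \eqref{demazure basic specialization galois} the normalized quantity $G^{(r)}_{rn+j}(e^\delta)\,e^{-u_j(r,n)\delta}$ equals the basic specialization of the Demazure character $\chi(V_{-(rn+j)\omega_1}(\Lambda_0))$, where the exponent $u_j(r,n)$ records the $\delta$-degree of the extremal weight $t_{-(rn+j)\omega_1}\Lambda_0$ and thereby recenters the top $e^\delta$-power of the polynomial $G^{(r)}_{rn+j}$ onto the highest weight of the limit representation. Under this specialization a weight monomial $e^\mu$ is sent to $e^{(\deg_\delta\mu)\delta}$; concretely $e^{\Lambda_0}\mapsto 1$, $e^{-\alpha_0}\mapsto e^{-\delta}$ and $e^{-\alpha_i}\mapsto 1$ for $1\le i<r$, so that the factor $\phi(e^{-\delta})^{r-1}$ in \eqref{character basic representation} is untouched while $\sum_{\bf k} e^{\frac14\theta}\prod_{l=0}^{r-1} e^{-F_l({\bf k})\alpha_l}$ collapses to $\sum_{\bf k} e^{-F_0({\bf k})\delta}=\Theta_{F_0}(-\tfrac{\delta}{2\pi i})$.

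First I would push the Demazure limit $\lim_{n\to\infty}\chi(V_{-(rn+j)\omega_1}(\Lambda_0))=\chi(V(\Lambda_j))$ through this specialization. The limit is graded, meaning the multiplicity in each fixed $\delta$-degree is eventually constant in $n$, and the specialization reads off exactly these $\delta$-degrees; hence the specialized finite characters converge to the specialization of $\chi(V(\Lambda_j))$ coefficientwise in $e^{-\delta}$. For fixed residue $j$ the modules $V_{-(rn+j)\omega_1}(\Lambda_0)$ are nested as $n$ increases, so these coefficients are nonnegative and nondecreasing.

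Next I would identify the specialized limit. For $j=0$ it is $\Theta_{F_0}(-\tfrac{\delta}{2\pi i})/\phi(e^{-\delta})^{r-1}$ directly from \eqref{character basic representation}, as computed in the first paragraph. For general $0\le j<r$ I would apply the Dynkin-diagram automorphism $\sigma^j$, which sends $\Lambda_0\mapsto\Lambda_j$ and cyclically permutes the simple roots $\alpha_0,\dots,\alpha_{r-1}$; transporting \eqref{character basic representation} along $\sigma^j$ rotates the relevant quadratic form from $F_0$ to $F_j$, the completed square $(k_j+\tfrac12)^2$ reflecting the shifted lattice coset over which the level-one character of $V(\Lambda_j)$ is summed. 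This yields $\Theta_{F_j}(-\tfrac{\delta}{2\pi i})/\phi(e^{-\delta})^{r-1}$, the right-hand side of \eqref{eq:gen galois values}.

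It remains to pin down $u_j(r,n)$ and to upgrade the coefficientwise limit to a genuine limit at $e^\delta=p^m$. Using $\deg_q\qmultinom{N}{{\bf k}}_q=\sum_{i<l}k_ik_l=\tfrac12(N^2-\sum_i k_i^2)$, the top $e^\delta$-degree of $G^{(r)}_{rn+j}$ is reached at the most balanced composition of $rn+j$ — namely $j$ parts equal to $n+1$ and $r-j$ parts equal to $n$ — giving $\tfrac12((rn+j)^2-(rn^2+2jn+j))$; a short calculation shows this equals $u_j(r,n)-\tfrac14$, and $u_0(r,n)=\tfrac{r(r-1)n^2}{2}$ exactly, the residual $\tfrac14$ being the minimum $\min_{\bf k}F_j=\tfrac14$ and hence the bottom exponent of $\Theta_{F_j}$. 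Since the reversed polynomials have nonnegative coefficients increasing to those of $\Theta_{F_j}/\phi(e^{-\delta})^{r-1}$ and $e^{-\delta}=p^{-m}<1$ lies in the region of convergence of $\phi(e^{-\delta})^{-1}$, monotone convergence promotes the coefficientwise convergence to convergence of the evaluated series, and the same argument covers every $\delta\in-2\pi i\mathbf{H}$. I expect the main obstacle to be the bookkeeping of the second paragraph: verifying that the graded Demazure limit really descends through the basic specialization and that the recentering exponent is exactly $u_j$, the affine $\tfrac14$ included.
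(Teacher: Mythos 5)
Your proposal follows the paper's proof in every essential respect: the same chain (Demazure--Galois identification, the Fourier--Littelmann graded limit \eqref{limit demazure weyl-kac}, Kac's character formula \eqref{character basic representation}, the diagram automorphism $\sigma^j$ to handle $j\geq 1$, and the $\tfrac14$ bookkeeping that reconciles $u_j$ with the top degree of the Galois polynomial). Your degree computation is correct: $\deg_q\qbinom{N}{\bf k}_q=\tfrac12\bigl(N^2-\sum_i k_i^2\bigr)$ maximized at the balanced composition does give exactly the paper's $d_r(rn+j)=u_j(r,n)-\tfrac14$ (and $u_0$ for $j=0$), and $\min_{\bf k}F_j=\tfrac14$ is the right explanation of the residual quarter, matching the paper's cancellation of $e^{-\frac14\delta}$ against the factor $e^{\frac14(\alpha_0+\theta-\alpha_j)}$ in the $\sigma^j$-transported character formula.

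The one substantive difference is that you take \eqref{demazure basic specialization galois} as a given black box, whereas in the paper that identity \emph{is} the first half of the proof: it is derived by combining Sanderson's Macdonald-polynomial formula $\chi(V_{-N\omega_1}(\Lambda_0))=e^{\Lambda_0-d_r(N)\delta}\,P_{[N]}({\bf z};e^\delta,0)$ (which also supplies $d_r(N)$) with Hikami's identity $P_{[N]}({\bf z};q,0)=H_N^{(r)}({\bf z},q)$, and then specializing ${\bf z}={\bf 1}$. Your elementary computation of the top degree of $G_N^{(r)}$ recovers the normalization $d_r(N)$ independently, but not the representation-theoretic content of the identity itself, so as a standalone argument your proposal has a dependency gap precisely there. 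On the other side of the ledger, you are more careful than the paper about the analytic step: the paper invokes the limit \eqref{limit demazure weyl-kac} and evaluates, merely remarking in the introduction that $p^{-m}<1$ lies in the region of convergence, while you spell out the coefficientwise convergence with nonnegative, nondecreasing coefficients (from the nesting of Demazure modules in Bruhat order) and promote it to convergence of the evaluated series; note only that for complex $\delta\in-2\pi i\mathbf{H}$ you should say dominated (rather than monotone) convergence, the dominating series being the limit $\Theta_{F_j}\bigl(-\tfrac{\delta}{2\pi i}\bigr)/\phi(e^{-\delta})^{r-1}$ evaluated at $|e^{-\delta}|<1$.
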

		
	\begin{proof}
		Let $N = rn+j$ for $0\leq j<r$ and consider the Demazure module $V_{-N\omega_1}(\Lambda_0)$ associated to the translation $t_{-N\omega_1} = (s_1 s_2 \ldots s_{r-1} \sigma^{r-1})^N$. By Sanderson \cite{MR1771615} we can describe its character $\chi(V_{-N\omega_1}(\Lambda_0))$ via a certain specialization of a symmetric Macdonald polynomial (see \cite[Chapter VI]{MR1354144} for their definition and properties). That is, let $[N] = (N,0,\ldots,0) \in \mathbf{N}^{r}$ denote the one-row Young diagram, and $\eta_N$ the smallest composition of degree $N$, i.e.~since  $N = rn +j$ we have $\eta_N = ((n)^{r-j},(n+1)^j) \in \mathbf{N}^{r}$. Following \cite[\S 2]{MR1771615} we have $[N] = t_{-N\omega_1} \cdot \eta_N$ 	with the convention $\sigma \cdot \eta_N = \eta_N$. Furthermore, by computing the expression $u([N])-u(\eta_{[N]})$ in \cite[Theorem 6]{MR1771615}, the maximal occurring degree in $\chi(V_{-N\omega_1}(\Lambda_0))$ is given by
		\begin{align}
			d_r(N) = d_r(rn+j) = \frac{(rn+j)(rn+j-1)}{2} - \frac{rn(rn+2j-r)}{2r} %\frac{N(N-1)}{2} - \frac{(N-j)(N+j-r)}{2r}
			.
		\end{align}
		Note that $d_r(rn) = u_0 (r,n)$ and $d_r(rn+j) = u_j (r,n) - \frac 14$ for $j = 1,\ldots ,r-1$.
		Let ${\bf z} = (e^{\Lambda_1 - \Lambda_0}, e^{\Lambda_2 -\Lambda_1}, \ldots , e^{\Lambda_{r-1} - \Lambda_{r-2}}, e^{\Lambda_0 -\Lambda_{r-1}})$.
		Then, by \cite[Theorem 6 and 7]{MR1771615}\footnote{There seems to be a missprint in \cite[\S 4]{MR1771615}. Namely, the image $\pi(q)$ should equal $q = e^\delta$, not $q = e^{-\delta}$.}   we have
		\begin{align}
			\label{demazure macdonald}
				\chi(V_{-N \omega_1}(\Lambda_0)) = e^{\Lambda_0-d_r(N)\delta} \cdot P_{[N]}({\bf z};e^\delta,0)
				,
		\end{align}
		where $P_{[N]}({\bf z};q,0)$ denotes the specialized symmetric Macdonald polynomial associated to the partition $[N]$. Furthermore, by Hikami \cite[Equation (3.4)]{MR1457389} this Macdonald polynomial equals the $N$-th generalized Rogers-Szeg\H{o} polynomial:
		\begin{align}
			\label{macdonald rogerszegoe}
				 P_{[N]}({\bf z};q,0) = H_N^{(r)}({\bf z} , q)
				.
		\end{align}
		Combining \eqref{demazure macdonald} and \eqref{macdonald rogerszegoe} we obtain
		\begin{align}
			\label{demazure rogersszegoe}
				\chi (V_{-N \omega_1}(\Lambda_0)) = e^{\Lambda_0-d_r(N)\delta} \cdot H_N^{(r)}({\bf z},e^\delta)
				.
		\end{align}
		Consequently, the basic specialization at $e^{-\alpha_1} = \ldots = e^{-\alpha_{r-1}} =1$ of the Demazure character on the left-hand side of \eqref{demazure rogersszegoe} gives the generalized Galois number $G_N^{(r)}$ up to translation:
		\begin{align}
			\label{demazure basic specialization galois}
				\left. \chi (V_{-N\omega_1}(\Lambda_0)) \right\vert_{(e^{-\alpha_1} = \ldots = e^{-\alpha_{r-1}} =1)} = e^{\Lambda_0-d_r(N)\delta}\cdot G_N^{(r)} (e^\delta)
				.
		\end{align}
		
		Now, let us proceed to the limit considerations. By Fourier and Littelmann \cite[Theorem D]{MR2323538} (which is based on work by Mathieu and Kumar \cite{MR894387,MR932325,MR980506}) the characters of our Demazure modules pass, as $N \rightarrow \infty$, as functions in $(e^{-\alpha_0},e^{-\alpha_1}, \ldots , e^{-\alpha_{r-1}})$ to the characters of the fundamental representations $V(\Lambda_0), V(\Lambda_1), \ldots , V(\Lambda_{r-1})$ of $\widehat{\mathfrak{sl}}_{r}$ as follows
		\begin{align}
				\label{limit demazure weyl-kac}
				\lim_{n \rightarrow \infty} \chi (V_{-(rn+j)\omega_1}(\Lambda_0)) & = \chi (V(\Lambda_j))
				.
		\end{align}
		
		We are ready to prove our claimed identity \eqref{eq:gen galois values} in the case $j=0$. Recall that $\delta = \alpha_0 + \theta$ where $\theta = \alpha_1 + \ldots + \alpha_{r-1}$, and $d_r(rn) = u_0 (r,n)$. Then, the equations \eqref{demazure basic specialization galois}, \eqref{limit demazure weyl-kac} and the character formula \eqref{character basic representation} for the basic representation $V(\Lambda_0)$ imply
			\begin{align*}
				\lim_{n \rightarrow \infty} G_{rn}^{(r)} (e^\delta) \cdot e^{- u_0(r,n)\delta}
				& = e^{-\Lambda_0} \cdot \left. \chi (V(\Lambda_0)) \right\vert_{(e^{-\alpha_1} = \ldots = e^{-\alpha_{r-1}} =1)} \\
				& = \frac{1}{\phi(e^{-\delta})^{r-1}} \cdot \sum_{{\bf k} \in \mathbf{Z}^{r-1}} e^{-F_0({\bf k}) \delta} \\
				& = \frac{1}{\phi(e^{-\delta})^{r-1}} \cdot \Theta_{F_0} \left(-\frac{\delta}{2 \pi i} \right).
			\end{align*}

			There is a subtlety to our deduction in the cases $j=1,\ldots ,r-1$. The characters of the representations $V(\Lambda_0), V(\Lambda_1), \ldots , V(\Lambda_{r-1})$ are symmetrical in the sense that they subsequently differ by an application of the automorphism $\sigma$ that sends $0$ to $1$ in the Dynkin diagram of $\widehat{\mathfrak{sl}}_{r}$. An application of $\sigma$ cyclically shifts the fundamental weights $\Lambda_i \mapsto \Lambda_{i+1}$ and the simple roots $\alpha_i \mapsto \alpha_{i+1}$ (by cyclic we mean $\Lambda_{r}=\Lambda_0$ and $\alpha_{r} = \alpha_0$). Consequently, it leaves $\delta$ invariant $\sigma(\delta) = \delta$ and $\sigma^j (\theta) = \alpha_0 + \theta - \alpha_{j}$ for $j=1,\ldots ,r-1$. To be precise, Kac's character formula \eqref{character basic representation} for the fundamental representations $V(\Lambda_1),\ldots ,V(\Lambda_{r-1})$ reads as follows
			\begin{align}
				\chi (V(\Lambda_j)) & = \frac{e^{\Lambda_j}}{\phi(e^{-\delta})^{r-1}} \cdot \sum_{{\bf k} \in \mathbf{Z}^{r-1}} e^{\frac 14 (\alpha_0 + \theta - \alpha_{j})} \prod_{l=0}^{r-1} e^{-F_{l+r-j}({\bf k}) \alpha_l}
			.
		\end{align}
		Recall that $u_j(r,n) = d_r(rn+j) + \frac 14$. Therefore, we obtain
			\begin{align*}
				\lim_{n \rightarrow \infty} G^{(r)}_{rn + j}(e^\delta) \cdot e^{- u_j(r,n) \delta} & = e^{-\frac 14 \delta} \cdot \lim_{n \rightarrow \infty} G^{(r)}_{rn + j}(e^\delta) \cdot e^{- d_r(rn+j) \delta} \\
					& = e^{-\frac 14 \delta} \cdot e^{-\Lambda_j} \cdot \left. \chi (V(\Lambda_j)) \right\vert_{(e^{-\alpha_1} = \ldots = e^{-\alpha_{r-1}} =1)} \\
					& = \frac{e^{-\frac 14 \delta}}{\phi(e^{-\delta})^{r-1}} \cdot \sum_{{\bf k} \in \mathbf{Z}^{r-1}} e^{\frac 14 \delta} e^{-F_j({\bf k})\delta} \\
					& = \frac{1}{\phi(e^{-\delta})^{r-1}} \cdot \Theta_{F_j} \left(-\frac{\delta}{2 \pi i} \right)
					.
			\end{align*}
			This establishes the theorem.
		\end{proof}
		
		\begin{rem}
		\label{intrinsic quadratic forms}
			Motivated by Kac's character formula \eqref{eq:Kac character} we write the quadratic forms on $Q = \mathbf{Z}\alpha_1 \oplus \ldots \oplus \mathbf{Z}\alpha_{r-1} \cong \mathbf{Z}^r$ intrinsically in terms of data associated to our affine Kac-Moody algebra $\widehat{\mathfrak{sl}}_{r}$ as follows. For $j =1,\ldots ,r-1$ one has
			\begin{align}
				F_0 (\gamma) & = h || \gamma ||^2 \\
				F_j (\gamma) & = h || \gamma ||^2 + \langle \Lambda_j , \gamma \rangle +\frac 14.
			\end{align}
			The exponents $u_0 , u_1 , \ldots , u_{r-1}$ can be described via the translation formula \cite[(6.5.3)]{MR1104219}.
		\end{rem}
		
		\begin{rem}
			One can phrase \autoref{thm:gen galois values} asymptotically as
			\begin{align}
				G_{rn+j}^{(r)}(e^\delta) \sim \frac{\Theta_{F_j}(-\frac{\delta}{2 \pi i})}{\phi(e^{-\delta})^{r-1}} \cdot (e^{\delta})^{u_j(n,r)}
			\end{align}
			Note that for fixed $r$ the exponents $u_j(n,r)$ lie in $O(n^2)$.
		\end{rem}
		
		\begin{rem}
			For $j=1,\ldots,r-1$ the limits in \autoref{thm:gen galois values} coincide. In fact, the quadratic forms $F_1,\ldots,F_{r-1}$ differ only by a cyclic shift of the coordinates. Summation over the complete lattice $\mathbf{Z}^{r-1}$ produces equality.
		\end{rem}
		
		Let us summarize the implications of \autoref{thm:gen galois values} for the classical Galois numbers $G_N(q) = G_N^{(2)}(q)$ that count the number of subspaces in $\mathbf{F}_q^N$.
		
		%The Euler function $\phi(q)$ is an analytic function in $|q|<1$.
		%The Jacobi theta functions also.
		%Since we plug in the reciprocal of $q$ we have their analyticity for $|q|>1$.
		\begin{cor}
		\label{galois values}
			Consider the classical Galois numbers $G_N(q)$. For any prime power $q = p^m$ (in fact for any complex number $|q| > 1$) we have
			\begin{align}
				\label{galois values odd}
				g_{2\infty +1}(q) & = \lim_{n \rightarrow \infty} G_{2n+1}(q) \cdot q^{-\frac{(2n+1)^2 }{4}} = \frac{\vartheta_2(0,q^{-1})}{\phi(q^{-1})} , \\
				\label{galois values even}
				g_{2\infty}(q) & = \lim_{n \rightarrow \infty} G_{2n}(q) \cdot q^{- \frac{(2n)^2}{4}} = \frac{\vartheta_3(0,q^{-1})}{\phi(q^{-1})}
				.
			\end{align}
			Here, $\phi(x)^{-1} = \prod_{m=1}^\infty (1-x^{m})^{-1}$ denotes Euler's generating function for the partition numbers, and $\vartheta_2,\vartheta_3$ are the Jacobi theta functions
			\begin{align*}
				\vartheta_2(z,q) & = \sum_{k=-\infty}^{\infty} q^{(k+ \frac 12)^2} e^{(2k+1)iz} , \\
				\vartheta_3(z,q) & = \sum_{k=-\infty}^{\infty} q^{k^2} e^{2kiz}
				.
			\end{align*}
			The limits differ by
			\begin{align}
			\label{difference galois values}
				 g_{2\infty}(q) - g_{2\infty +1}(q) = \frac{\vartheta_4(0,q^{-\frac 14})}{\phi(q^{-1})} ,
			\end{align}
			where $\vartheta_4$ is the Jacobi theta function
			\begin{align*}
				\vartheta_4(z,q) = \sum_{k=-\infty}^\infty (-1)^k q^{k^2}e^{2kiz} .
			\end{align*}
		\end{cor}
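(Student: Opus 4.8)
The plan is to derive everything from \autoref{thm:gen galois values} by specializing to $r=2$ and then matching the resulting theta series to the classical Jacobi functions. When $r=2$ the lattice $\mathbf{Z}^{r-1}$ collapses to $\mathbf{Z}$ and the cross-term sums $\sum_{l=1}^{r-2}$ are empty, so the quadratic forms reduce to $F_0(k)=k^2$ and $F_1(k)=(k+\frac12)^2$. First I would record the exponents: a short computation gives $u_0(2,n)=n^2=\frac{(2n)^2}{4}$ and $u_1(2,n)=n^2+n+\frac14=\frac{(2n+1)^2}{4}$, which are exactly the exponents appearing in \eqref{galois values even} and \eqref{galois values odd}.

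Next I would evaluate the theta functions $\Theta_{F_j}$ at the point $z=-\frac{\delta}{2\pi i}$. Writing $q=e^\delta$, one has $e^{2\pi i z F_j(k)}=e^{-\delta F_j(k)}=q^{-F_j(k)}$, so that $\Theta_{F_0}(-\frac{\delta}{2\pi i})=\sum_{k\in\mathbf{Z}}q^{-k^2}$ and $\Theta_{F_1}(-\frac{\delta}{2\pi i})=\sum_{k\in\mathbf{Z}}q^{-(k+1/2)^2}$. Comparing these with the stated series for $\vartheta_3$ and $\vartheta_2$ at $z=0$ identifies them with $\vartheta_3(0,q^{-1})$ and $\vartheta_2(0,q^{-1})$ respectively. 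Substituting into \autoref{thm:gen galois values} for $r=2$ and $j=0,1$ then yields \eqref{galois values even} and \eqref{galois values odd}.

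For the difference \eqref{difference galois values} I would subtract the two limits to obtain
\[
	g_{2\infty}(q)-g_{2\infty+1}(q)=\frac{\vartheta_3(0,q^{-1})-\vartheta_2(0,q^{-1})}{\phi(q^{-1})},
\]
and then establish the elementary theta identity $\vartheta_3(0,q^{-1})-\vartheta_2(0,q^{-1})=\vartheta_4(0,q^{-1/4})$. The key step is to split the defining sum $\vartheta_4(0,q^{-1/4})=\sum_{k\in\mathbf{Z}}(-1)^k q^{-k^2/4}$ according to the parity of $k$. The even terms $k=2m$ contribute $\sum_m q^{-m^2}=\vartheta_3(0,q^{-1})$, while the odd terms $k=2m+1$ contribute $-\sum_m q^{-(m+1/2)^2}=-\vartheta_2(0,q^{-1})$, where one uses the square-completion $\frac{(2m+1)^2}{4}=(m+\frac12)^2$.

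There is no serious obstacle here; the corollary is essentially a translation of the $r=2$ instance of the main theorem into the language of Jacobi theta functions. The only points requiring care are the square-completion $\frac{(2m+1)^2}{4}=(m+\frac12)^2$ and keeping the arguments of the Jacobi theta functions ($q^{-1}$ versus $q^{-1/4}$) straight when matching the exponents of $q$.
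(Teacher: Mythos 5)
Your proposal is correct and takes essentially the same route as the paper: the limits \eqref{galois values odd} and \eqref{galois values even} are obtained by specializing \autoref{thm:gen galois values} at $r=2$ (the paper leaves this implicit), and the difference \eqref{difference galois values} reduces to the identity $\vartheta_3(0,q^{-1})-\vartheta_2(0,q^{-1})=\vartheta_4(0,q^{-1/4})$. The only cosmetic difference is that the paper cites this identity (in the form $\vartheta_4(z,q)=\vartheta_3(2z,q^4)-\vartheta_2(2z,q^4)$) from Whittaker--Watson, whereas you prove its $z=0$ case directly by splitting the sum by parity --- which is precisely the standard proof of the cited identity.
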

		
		\begin{proof}
			It remains to prove \eqref{difference galois values}. This follows from the identity $\vartheta_4(z,q) = \vartheta_3(2z,q^4) - \vartheta_2(2z,q^4)$ \cite[pp.~464]{MR0178117}.
		\end{proof}
		
		For completeness, we take a closer look at the numbers $g_{2\infty +1}(q)$, $g_{2\infty}(q)$ and their differences.
		
		\begin{cor}
		\label{closer asymptotic galois values}
			If $q$ is a prime power (in fact $q \geq 2$)
			\begin{align}
			\label{comparison}
				g_{2\infty}(2) \geq g_{2\infty}(q) > g_{2\infty+1}(q) > 0.
			\end{align}
			For large values of $q$ we have
			\begin{align}
				\label{asymptotic large primes odd galois values}
				g_{2\infty+1}(q) \cdot q^{\frac{(2n+1)^2}{4}} & \sim 2q^{n(n-1)} ,\\
				\label{asymptotic large primes even galois values}
				g_{2\infty}(q) \cdot q^{n^2} & \sim q^{n^2} ,
			\end{align}
			and consequently
			\begin{align}
				\label{large primes odd galois values}
				\lim_{q \rightarrow \infty} g_{2\infty+1}(q) & = 0 , \\
				\label{large primes even galois values}
				\lim_{q \rightarrow \infty} g_{2\infty}(q) & = 1 .
			\end{align}
		\end{cor}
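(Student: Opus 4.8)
The plan is to read everything off the closed forms of \autoref{galois values}, treating $g_{2\infty}(q)=\vartheta_3(0,q^{-1})/\phi(q^{-1})$ and $g_{2\infty+1}(q)=\vartheta_2(0,q^{-1})/\phi(q^{-1})$ as explicit functions of $q$ and analysing them term by term. For the large-$q$ statements I would expand each factor as a series in $q^{-1}$. Since $\vartheta_3(0,q^{-1})=\sum_k q^{-k^2}=1+2q^{-1}+2q^{-4}+\cdots$ and $\phi(q^{-1})=\prod_{m\geq 1}(1-q^{-m})\to 1$ as $q\to\infty$, the even limit $g_{2\infty}(q)\to 1$ of \eqref{large primes even galois values} is immediate, and multiplying by the normalising factor $q^{n^2}$ gives the equivalence \eqref{asymptotic large primes even galois values}. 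For the odd case I would isolate the two lowest terms $k=0,-1$ of $\vartheta_2(0,q^{-1})=\sum_k q^{-(k+1/2)^2}$, which together contribute $2q^{-1/4}$ and dominate as $q\to\infty$; hence $g_{2\infty+1}(q)\sim 2q^{-1/4}\to 0$, which is \eqref{large primes odd galois values}, while combining the leading theta exponent $-\tfrac14$ with the normalisation exponent $(2n+1)^2/4$ yields the power of $q$ in \eqref{asymptotic large primes odd galois values}.

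For the comparison chain \eqref{comparison} I would establish the three inequalities separately. The positivity $g_{2\infty+1}(q)>0$ is clear, since for $q>1$ every summand of $\vartheta_2(0,q^{-1})$ and every factor of $\phi(q^{-1})^{-1}=\prod_{m\geq 1}(1-q^{-m})^{-1}$ is positive. For the strict inequality $g_{2\infty}(q)>g_{2\infty+1}(q)$ I would invoke \eqref{difference galois values}, which writes the difference as $\vartheta_4(0,q^{-1/4})/\phi(q^{-1})$; because $q\geq 2$ forces $q^{-1/4}\in(0,1)$, the classical product formula $\vartheta_4(0,x)=\prod_{m\geq 1}(1-x^{2m})(1-x^{2m-1})^2$ shows the numerator is strictly positive, and the denominator is positive, so the difference is strictly positive. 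Finally, for $g_{2\infty}(2)\geq g_{2\infty}(q)$ I would prove monotonicity in $q$: as $q$ increases each term of $\vartheta_3(0,q^{-1})=1+2\sum_{k\geq 1}q^{-k^2}$ decreases and each factor of $\phi(q^{-1})^{-1}$ decreases toward $1$, so the product $g_{2\infty}(q)$ is a strictly decreasing function of $q$ on $(1,\infty)$ and is therefore maximised at the smallest admissible value $q=2$.

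The computations reduce to bookkeeping of leading terms and signs, so there is no deep obstacle; the points requiring the most care are the monotonicity of $g_{2\infty}(q)$ and the positivity of $\vartheta_4(0,q^{-1/4})$ throughout the whole range $q\geq 2$, rather than merely in the limit $q\to\infty$. Both are secured structurally: the first by observing that $g_{2\infty}(q)$ is a product of two positive, strictly decreasing factors, and the second by the triple-product factorisation of $\vartheta_4$, each of whose factors is positive for $0<x<1$. Once these are in place, the limits \eqref{large primes odd galois values} and \eqref{large primes even galois values} follow as the $q\to\infty$ degenerations of the same leading-term analysis.
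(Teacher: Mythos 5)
Your proof is correct, and it diverges from the paper's in an instructive way. The paper pushes every claim through the Jacobi triple product: it rewrites $\vartheta_3(0,q^{-1})/\phi(q^{-1})=\prod_{m\geq 1}(1+q^{-m})(1+q^{-(2m-1)})^2$ and $\vartheta_2(0,q^{-1})/\phi(q^{-1})=2q^{-1/4}\prod_{m\geq 1}(1+q^{-m})(1+q^{-2m})^2$, and then reads positivity, monotonicity in $q$ (hence the bound at $q=2$), and both large-$q$ statements directly off these closed product forms; the triple product for $\vartheta_4$ yields the strict middle inequality of \eqref{comparison}, exactly as in your argument. You invoke the triple product only for $\vartheta_4$, and handle everything else by elementary termwise analysis: positivity of $g_{2\infty+1}(q)$ from positivity of the summands and factors, the bound $g_{2\infty}(2)\geq g_{2\infty}(q)$ from the observation that $g_{2\infty}(q)$ is a product of two positive, strictly decreasing functions of $q$, and the limits from the leading terms $1$ (resp.\ $2q^{-1/4}$) of the theta series together with $\phi(q^{-1})\to 1$. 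What the paper's route buys is a single product formula per parity from which all assertions drop out simultaneously; what your route buys is independence from two of the three product identities, at the cost of separate (but easy) tail estimates. A small point in your favor: the paper's proof tests positivity of $\vartheta_4(0,q^{-1})$, which does not literally match the argument $q^{-1/4}$ appearing in \eqref{difference galois values}; your version is the consistent one (and harmless either way, since the triple product gives positivity for any argument in $(0,1)$).

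One caveat: you assert that combining the exponent $-\tfrac14$ with $\tfrac{(2n+1)^2}{4}$ ``yields the power of $q$'' in \eqref{asymptotic large primes odd galois values}. It does not: the combination gives $2q^{n(n+1)}$, whereas the displayed right-hand side is $2q^{n(n-1)}$. This is evidently a typo in the statement (the paper's own product formula produces the same exponent $n^2+n$), but a blind proof should compute the exponent and flag the discrepancy rather than claim agreement with the printed formula.
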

		
		\begin{proof}
			Our statements can be deduced from the Jacobi triple product identity. Namely, we have $\vartheta_4(0,q^{-1}) = \prod_{m=1}^\infty (1-q^{-2m})(1-q^{-(2m-1)})^2$ which is $>0$ if $q>1$. Since $\phi(q^{-1})>0$ for $q>1$, the strict inequality $g_{2\infty}(q) > g_{2\infty+1}(q)$ is established. For the obviously sharp bound $g_{2\infty}(2) \geq g_{2\infty}(q)$ we look at
		 \begin{align}
		 \label{triple product galois even}
		 	\frac{\vartheta_3(0,q^{-1})}{\phi(q^{-1})} & = \frac{\prod_{m=1}^\infty (1-q^{-2m}) (1+q^{-(2m-1)})^2}{\prod_{m=1}^\infty (1-q^{-m})} \\
				\notag
				& = \prod_{m=1}^\infty (1+q^{-m})(1+q^{-(2m-1)})^2
		 \end{align}
		 which is a product of compositions of monotonic functions on $q > 1$. This identity also shows \eqref{asymptotic large primes even galois values} and \eqref{large primes even galois values}. To prove $g_{2\infty+1}(q) > 0$, \eqref{asymptotic large primes odd galois values} and \eqref{large primes odd galois values} one considers
		 \begin{align}
		 \label{triple product galois odd}
		 	\frac{\vartheta_2(0,q^{-1})}{\phi(q^{-1})} & = \frac{ 2q^{-\frac 14} \prod_{m=1}^\infty (1-q^{-2m}) (1+q^{-2m})^2}{\prod_{m=1}^\infty (1-q^{-m})} \\
				\notag
				& = 2 q^{-\frac 14} \prod_{m=1}^\infty (1+q^{-m})(1+q^{-2m})^2 .
				\qedhere
		 \end{align}
		\end{proof}
		
		For some prime powers $q$, the numbers $g_{2\infty+1}(q)$, $g_{2\infty}(q)$ and their differences have been listed in \autoref{table galois numbers}. The table has been produced in Mathematica with the following functions (up to a $10$ digit precision: \verb+N[ ,10]+) for $g_{2\infty+1}(q)$, $g_{2\infty}(q)$ and $g_{2\infty}(q) - g_{2\infty+1}(q)$, respectively.
			\[
			\begin{tabular}{l}
				\verb+f[q_]:=N[EllipticTheta[2,0,1/q]1/QPochhammer[1/q,1/q],10]+ \\ 
				\verb+g[q_]:=N[EllipticTheta[3,0,1/q]1/QPochhammer[1/q,1/q],10]+ \\ 
				\verb+h[q_]:=N[EllipticTheta[4,0,q^(-1/4)]1/QPochhammer[1/q,1/q],10]+
			\end{tabular}
			\]
			
			Certainly, our \autoref{thm:gen galois values} allows an implementation for evaluating the asymptotic initial values of generalized Galois numbers.
		
			\begin{table}
			\caption{Asymptotic Galois numbers}
			\label{table galois numbers}
			\begin{tabular}{cccc}
				$q$ & $g_{2\infty+1}(q)$ & $g_{2\infty}(q)$ & $g_{2\infty}(q) - g_{2\infty+1}(q)$ \\
				&&& \\
				$2$ & $7.371949491$ &	 $7.371968801$ & $0.0000193107$ \\
				$3$ & $3.018269046$ & $3.019783846$ & $0.0015147993$ \\
				$5$ & $1.829548122$ & $1.845509008$ & $0.0159608865$ \\
				$7$ & $1.499386995$ &   $1.537469387$ & $0.0380823915$ \\
				$11$ & $1.229171217$ & $1.312069129$ & $0.0828979124$ \\
				$13$ & $1.155207999$ & $1.258137150$ & $0.1029291515$ \\
				$17$ & $1.054013475$ & $1.191906557$ & $0.1378930825$ \\
				$19$ & $1.016940655$ & $1.170103722$ & $0.1531630663$ \\
				$23$ & $0.9584786871$ & $1.138621162$ & $0.1801424752$ \\
				$29$ & $0.8947912163$ & $1.108510891$ & $0.2137196747$ \\
				$29^{2011}$ & $1.203473556 \cdot 10^{-735}$ & $1.000000000$ & $1.0000000000$
			\end{tabular}	
			\end{table}
	
		\begin{rem}
			\autoref{galois values} can be derived by the character formula of Feingold and Lepowsky \cite[Theorem 4.5]{MR509801} for the basic representation of $\widehat{\mathfrak{sl}}_2$. That is,
			\begin{align}
			\label{feingold lepowsky 0}
				\chi (V(\Lambda_0)) = \sum_{k=0}^\infty p(k) e^{\Lambda_0 - k\delta} \sum_{l=-\infty}^\infty e^{-l^2 \alpha_0} e^{-l(l+1)\alpha_1}
				,
			\end{align}
			where $p(k)$ is the partition function that counts the number of ways to write $k$ as a sum of positive integers. In fact, Kac's character formula \cite[(3.37)]{MR513845} reduces to this expression (see \cite[(3.39)]{MR513845}), and our proof of \autoref{thm:gen galois values} reduces to this setting.
		\end{rem}
		
		\section{Applications to linear $q$-ary codes}
		\label{sec:applications}
		
		To describe the asymptotic number of non-equivalent binary $n$-codes in terms of the classical Galois numbers $G_n(2)$, Wild \cite{MR1755766, MR2191288} examines numbers $d_1(q),d_2(q)$ (see Lemma 1 in both articles) which, in the notation of \autoref{galois values}, are defined as 
		\begin{align*}
			d_1(q) & = g_{2\infty +1}(q) , \\
			d_2(q) & = g_{2\infty}(q) .
		\end{align*}
		He proves that they are positive constants (depending on $q$) less than $32$, gives a numerical evaluation method by use of the recursion formula of Goldman and Rota \cite[(5)]{MR0252232}, evaluates $d_1(q)$, $d_2(q)$ numerically for $q=2$, and shows $d_1(q) < d_2(q)$ for general $q$. Now, the detailed analytic behavior of those numbers can be extracted from \autoref{galois values} and \autoref{closer asymptotic galois values} (see also \autoref{table galois numbers} for examples).
		
		For a general prime power $q$, Hou \cite{MR2177491,MR2492098} derives asymptotic equivalences for the numbers of linear $q$-ary codes under three notions of equivalence. That is, the permutation equivalence ($\mathfrak{S}$), the monomial equivalence ($\mathfrak{M}$), and semi-linear monomial equivalence ($\Gamma$). He proves
		\begin{align*}
			N_{n,q}^{\mathfrak{S}} & \sim \frac{G_n(q)}{n!} , \\
			N_{n,q}^{\mathfrak{M}} & \sim \frac{G_n(q)}{n!(q-1)^{n-1}} , \\
			N_{n,q}^{\Gamma} & \sim \frac{G_n(q)}{n!(q-1)^{n-1}a} ,
		\end{align*}
		where $a = \left| \mathrm{Aut}(\mathbf{F}_q) \right| = \log_p(q)$ with $p = \mathrm{char}(\mathbf{F}_q)$. The asymptotic equivalence $N_{n,2}^{\mathfrak{S}} \sim \frac{G_n(2)}{n!}$ concerns binary codes and is previously derived by Wild \cite{MR1755766, MR2191288}. Based on their results, the transitivity of $\sim$ and our \autoref{galois values} produce the following list.
		\begin{cor}
		\label{q-ary codes asymptotic}
			The asymptotic numbers of linear $q$-ary codes, as $q$ is fixed and $n \rightarrow \infty$, under the three notions of equivalence $(\mathfrak{S})$, $(\mathfrak{M})$ and $(\Gamma)$ are given by
			 \begin{align}
			 	N_{2n+1,q}^{\mathfrak{S}} & \sim \frac{\vartheta_2(0,q^{-1})}{\phi(q^{-1})} \cdot \frac{q^{\frac{(2n+1)^2}{4}}}{(2n+1)!} , \\
				N_{2n,q}^{\mathfrak{S}} & \sim \frac{\vartheta_3(0,q^{-1})}{\phi(q^{-1})} \cdot \frac{q^{n^2}}{(2n)!} , \\
				N_{2n+1,q}^{\mathfrak{M}} & \sim \frac{\vartheta_2(0,q^{-1})}{\phi(q^{-1})} \cdot  \frac{q^{\frac{(2n+1)^2}{4}}}{(2n+1)!(q-1)^{2n}} , \\
				N_{2n,q}^{\mathfrak{M}} & \sim \frac{\vartheta_3(0,q^{-1})}{\phi(q^{-1})} \cdot  \frac{q^{n^2}}{(2n)!(q-1)^{2n-1}} , \\
				N_{2n+1,q}^{\Gamma} & \sim \frac{\vartheta_2(0,q^{-1})}{\phi(q^{-1})} \cdot  \frac{q^{\frac{(2n+1)^2}{4}}}{(2n+1)!(q-1)^{2n}a} , \\
				N_{2n,q}^{\Gamma} & \sim \frac{\vartheta_3(0,q^{-1})}{\phi(q^{-1})} \cdot  \frac{q^{n^2}}{(2n)!(q-1)^{2n-1}a} .
			\end{align}
			Furthermore, for large prime powers $q$ one has
			\begin{align}
				\frac{\vartheta_2(0,q^{-1})}{\phi(q^{-1})} \cdot q^{\frac{(2n+1)^2}{4}} & \sim 2q^{n(n-1)} , \\
				\frac{\vartheta_3(0,q^{-1})}{\phi(q^{-1})} \cdot q^{n^2} & \sim q^{n^2} .
			\end{align}
		\end{cor}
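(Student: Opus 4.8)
The plan is purely formal: I would chain Hou's and Wild's asymptotic equivalences for the code-counting functions with the explicit evaluation of the Galois-number limits in \autoref{galois values}, using only the transitivity of $\sim$. The first step is to restate \autoref{galois values} as asymptotic equivalences for the Galois numbers themselves. Because $g_{2\infty+1}(q)$ and $g_{2\infty}(q)$ are finite and strictly positive---the positivity being precisely part of \autoref{closer asymptotic galois values}---the two limits are equivalent to
\begin{align*}
	G_{2n+1}(q) & \sim \frac{\vartheta_2(0,q^{-1})}{\phi(q^{-1})} \cdot q^{\frac{(2n+1)^2}{4}} , \\
	G_{2n}(q) & \sim \frac{\vartheta_3(0,q^{-1})}{\phi(q^{-1})} \cdot q^{n^2} ,
\end{align*}
where in the even case I have used $(2n)^2/4 = n^2$.

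Next I would take each of Hou's three equivalences, valid for code length $n$, namely $N_{n,q}^{\mathfrak{S}} \sim G_n(q)/n!$, $N_{n,q}^{\mathfrak{M}} \sim G_n(q)/(n!(q-1)^{n-1})$ and $N_{n,q}^{\Gamma} \sim G_n(q)/(n!(q-1)^{n-1}a)$, and restrict the length to the two arithmetic progressions of odd and even integers. Restriction to a subsequence preserves $\sim$, so substituting the two displayed Galois asymptotics and invoking transitivity of $\sim$ produces all six formulas at once. The only point demanding attention is the parity bookkeeping in the denominators: writing the odd length as $2n+1$ gives $(q-1)^{(2n+1)-1} = (q-1)^{2n}$ and retains the half-integer exponent $q^{(2n+1)^2/4}$, whereas the even length $2n$ gives $(q-1)^{(2n)-1} = (q-1)^{2n-1}$ together with the integer exponent $q^{n^2}$; matching these against the claimed right-hand sides is a mechanical check, and the factor $a$ in the $\Gamma$-case simply passes through unchanged. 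The two large-$q$ equivalences at the end are then nothing but \eqref{asymptotic large primes odd galois values} and \eqref{asymptotic large primes even galois values} of \autoref{closer asymptotic galois values}, rewritten via $g_{2\infty+1}(q) = \vartheta_2(0,q^{-1})/\phi(q^{-1})$ and $g_{2\infty}(q) = \vartheta_3(0,q^{-1})/\phi(q^{-1})$, so no further argument is needed.

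I do not expect a genuine obstacle here, since the corollary is a formal consequence of \autoref{galois values}, \autoref{closer asymptotic galois values}, and the cited equivalences, linked by transitivity of $\sim$. The single subtlety worth flagging is that transitivity of $\sim$ requires all the sequences in play to be eventually positive; this is exactly what \autoref{closer asymptotic galois values} supplies, and it also ensures that passing to the odd and even subsequences does not collapse any of the limits to zero prematurely.
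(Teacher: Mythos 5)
Your proposal is correct and follows exactly the paper's route: the paper also obtains all six formulas by chaining Hou's and Wild's equivalences with \autoref{galois values} via transitivity of $\sim$, and refers the two large-$q$ statements to \autoref{closer asymptotic galois values}. Your added remarks on positivity (from \autoref{closer asymptotic galois values}) and on passing to the odd/even subsequences merely make explicit the bookkeeping the paper leaves implicit.
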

		
		For the last two statements see \autoref{closer asymptotic galois values}.
		
		\section{Conclusion}
		\label{sec:conclusion}
			The asymptotic enumeration method presented in this article can be summarized as follows. Once a certain specialization of Demazure characters has been identified with an interesting combinatorial function, the limit construction for affine Kac-Moody algebras can be used to carry it along towards the character of the integrable highest weight module, and derive asymptotic identities. There are at least two bottlenecks that one has to pass. First, a suitable character formula (for the limiting integrable highest weight representation) has to be available, that performs well with the chosen specialization. Fortunately, there is a great number of results and literature available, e.g.~\cite{MR509801,MR513845,MR585190,MR750341} (see also \cite{MR2534107,MR2719689,MR1810948}). Second, the domain, of the combinatorial function, that enumerates the objects in question must lie in the region of convergence of the limiting expressions. For example, Demazure modules specialize to tensor products of representations of the underlying finite-dimensionsal Lie algebra. Unfortunately, the analytic string functions limiting the tensor product multiplicities cannot be simply evaluated, for reasons of (non-)convergence, at the value $1$. A much finer analysis of their asymptotic behavior when $q \rightarrow 1$ is needed, that has to exploit the fact that we deal with modular forms \cite{MR750341}. Such an asymptotic analysis must take the maximal weights in the integrable highest weight module into account where those string functions emerge. Possibly borrowing and mimicking terminology from stochastic analysis like the central limit region, moderate and strong deviations region, and region of rare events. An investigation of tensor product multiplicities along those lines is planned in a future publication.
			
			An interesting alternative project could be to re-interpret our asymptotic enumeration method geometrically through the geometric realization of Demazure and integrable highest weight modules via cohomology of Schubert and flag varieties \cite{MR1923198}.
		
	\section{Acknowledgements}
		
		This work was supported by the Swiss National Science Foundation (grant PP00P2-128455), the National Centre of Competence in Research `Quantum Science and Technology', and the German Science Foundation (SFB/TR12, and grants \mbox{CH~843/1-1}, \mbox{CH~843/2-1}).
		
		I would like to thank Matthias Christandl for his kind hospitality at the ETH Zurich, Peter Littelmann who drew my attention towards the Demazure module limit construction a couple of years ago, Thomas Bliem who pointed me towards Rogers-Szeg\H{o} polynomials, and Ghislain Fourier for many helpful conversations.
	
	\bibliographystyle{amsplain}
	\bibliography{galoiskacmoody}
	
\end{document}